\def\({\left(}
\def\){\right)}
\let\d\partial
\let\ph\varphi
\def\A{{\mathrm A}}
\def\D{{\mathrm D}}
\def\E{{\mathrm E}}
\def\C{{\mathbb C}}
\def\Z{{\mathbb Z}}
\def\R{{\mathbb R}}
\def\bx{{\boldsymbol x}}
\def\bt{{\boldsymbol t}}
\newtheorem{proposition}{Proposition}
\newtheorem{lemma}{Lemma}
\newtheorem{theorem}{Theorem}
\newtheorem{Definition}{Definition}
\begin{document}
\title[Unimodularity of the Poincar\'e polynomial]
{Unimodularity of Poincar\'e polynomials of Lie algebras for
semisimple singularities}
\author{Mamuka Jibladze}

\address{Department of Algebra, Razmadze Mathematical Institute, Tbilisi, Georgia}
\email{jib@rmi.acnet.ge}

\author{Dmitry Novikov}

\address{Department of Mathematics, Weizmann Institute of Science, Rehovot, Israel}
\email{dmitry.novikov@weizmann.ac.il}

\keywords{isolated semisimple singularities, moduli algebra,
derivations, Poincar\'e polynomial, palindromic polynomials.}

\subjclass{32S10}

\begin{abstract}
We single out a large class of semisimple singularities with the
property that all roots of the Poincar\'e polynomial of the Lie
algebra of derivations of the corresponding suitably (not
necessarily quasihomogeneously) graded moduli algebra lie on the
unit circle; for a still larger class there might occur exactly four
roots outside the unit circle. This is a corrected version of
Theorem 4.5 from \cite{ElKh}.
\end{abstract}

\maketitle

\section{Formulations}

\subsection{Tensor product of graded algebras and their gradings}

An important invariant of an isolated singularity $S$ given by a
holomorphic germ $f:(\C^n,0)\to(\C,0)$ is the algebra
$\C[[\bx]]\left/\(\frac{\d f}{\d\bx}\)\right.$. We will denote this
algebra by $A(S)$.

For complex isolated singularities $S_j$ given by germs $f_j\in
\C[[\bx_j]]$, $\bx_j\in\C^{n_j}$, $j=1,...,k$, their direct sum
$S=\bigoplus_{j=1}^kS_j$ is the singularity given by the germ of
$f(\bx)=\sum_{j=1}^kf_j(\bx_j)$ in $\C^{\sum_{j=1}^kn_j}$, see
\cite{AVG}. The algebra $A(S)$ of such a direct sum can be naturally
identified with the tensor product of algebras $A(S_j)$ of the
summands.

Simple singularities are those without moduli, see \cite{AVG} for
precise definition. They are classified by the Coxeter groups $A_k$,
$D_k$ and the exceptional ones $E_6$, $E_7$ and $E_8$, and are
denoted by $\A_k$, $\D_k$, $\E_6$, $\E_7$ and $\E_8$
correspondingly. A semisimple singularity is a direct sum of simple
singularities.

Suppose that each algebra $A(S_j)$ is endowed with a $\Z$-grading.
Then their tensor product $A(S)$ acquires a natural $\Z^k$-grading.

\begin{Definition}
For a finite-dimensional $\Z^k$-graded algebra $A=\oplus_{\alpha\in
\Z^k}A_\alpha$ we define its Poincar\'e polynomial $P_A$ by the
equality
$$
P_A(t_1,...,t_k)=\sum_\alpha\dim(A_\alpha)
t_1^{\alpha_1}...t_k^{\alpha_k}.
$$
\end{Definition}

The Poincar\'e polynomial $P(S)$ of $A(S)$ is equal to the product
of the Poincar\'e polynomials $P(S_j)(t_j)$ of $A(S_j)$:
$$
P(S)(\bt)=\prod_{j=1}^kP(S_j)(t_j),\qquad\bt=(t_1,...,t_k)\in \C^k.
$$

Lie algebras $L(S_j)$ of derivations of the graded algebras $A(S_j)$
inherit $\Z$-gradings following the convention deg$(\d/\d
x)=-$deg$(x)$ for all involved variables $x$.  Similarly, the
$\Z^k$-grading of the algebra $A(S)$ induces a $\Z^k$-grading on the
Lie algebra $L(S)$ of its derivations. Let $P_L(S_j)$ denote the
Poincar\'e polynomials of $L(S_j)$ corresponding to these gradings.
A theorem of Block \cite{Block} implies that the Poincar\'e
polynomial of $L(S)$ is
$$
P_L(S)(\bt)=\left[\sum_{j=1}^k
\frac{P_L(S_j)(t_j)}{P(S_j)(t_j)}\right]\prod_{j=1}^k P(S_j)(t_j).
$$

The above $\Z^k$-gradings can produce various $\Z$-gradings of
$A(S)$ and $L(S)$ via linear functionals $\phi:\Z^k\to\Z$. In other
words, one can define a $\Z$-grading on $A(S)$ and on $L(S)$ as a
linear combination with integer weights $w_j$ of the $\Z$-gradings
for $S_j$. The Poincar\'e polynomial $P_L^\phi(S)(t)$ of $L(S)$ with
respect to the resulting grading will be just
$P_L(S)(t^{w_1},...,t^{w_k})$.

For the semisimple singularities the most natural grading is the
quasihomogeneous one. Indeed, since the functions $f_j$ defining
simple singularities are quasihomogeneous, their sum $f=\sum
f_j(\bx_j)$ is quasihomogeneous as well.

That said, in this paper we follow a different choice of weights
considered in \cite{ElKh}. It corresponds to the linear functional
alluded to above given by $\phi(n_1,...,n_k)=n_1+...+n_k$, or, in
terms of the weights, $w_1=...=w_k=1$. This choice of weights,
together with a suitable choice of quasihomogeneous weights for
simple singularities, leads to our main result --- Theorem 1 below.

This theorem can be viewed as an alternative version of Theorem 4.5
from \cite{ElKh}. There it is stated that, in our terms, polynomials
$P_L^+(S)$ are \emph{unimodular}, i.~e. have all their roots on the
unit circle, for certain two classes of semisimple singularities,
i.~e., the sums of simple ones.

As it turns out, this is actually not true for the semisimple
singularities from the second named class. The first counterexample
occurs for the singularity $\D_{17}\oplus\E_7$, as shown in our
table below (note that, although some of our gradings are different,
those for $\D_{2k+1}$ and $\E_7$ are identical with those from
\cite{ElKh}).

The problem lies in the proof of 4.5 in \cite{ElKh}. That proof
involves an argument from the proof of Proposition 4.2 of the same
paper. Although the proposition itself is correct, its proof,
belonging to the first author of the present paper, contains an
erroneous claim, as pointed out by the second author (namely, it was
falsely assumed that for any two polynomials of the same degree with
positive leading coefficients which are both odd or both even and
all of their roots are real and lie in the segment $[-2,2]$, all
roots of their sum are also real and lie in the same segment).

Thus our aim is to describe certain class of semisimple
singularities for which the statement of Theorem 4.5 from
\cite{ElKh} holds true. This is done in our Theorem 1. As already
mentioned, we choose different quasihomogeneous gradings for some
simple singularities --- namely, for $\A_k$ and $\D_{2k}$. This
gives unimodularity of Poincar\'e polynomials for Lie algebras of a
class of semisimple singularities that is strictly larger than the
one corresponding to gradings considered in \cite{ElKh}.

\subsection{Two types of singularities and unimodularity of their Poincar\'e
polynomials}


The quasihomogeneous gradings of algebras $A(S)$ (and,
correspondingly, their Lie algebras $L(S)$) of simple singularities
$S=$ $\A_k$, $\D_m$ and $\E_j$ $(j=6,7,8)$ are defined by the
following quasihomogeneous weights $\omega$ of variables:
\begin{equation}\label{weights}
\begin{array}{lllll}
\A_k, &k\ge 1\quad & x^{k+1}& \omega(x)=2;\\
\D_m, &m\ge 4\quad & x^2y+y^{m-1}& \omega(x)=m-2,&\omega(y)=2;\\
\E_6&& x^3+y^4& \omega(x)=4,&\omega(y)=3;\\
\E_7&& x^3+xy^3& \omega(x)=3,&\omega(y)=2;\\
\E_8&& x^3+y^5& \omega(x)=5,&\omega(y)=3.
\end{array}
\end{equation}

\begin{proposition}[cf. \cite{ElKh}]\label{table}
Poincar\'e polynomials $P(S)$ and $P_L(S)$ for simple singularities
$S$ with respect to the above weights are:
$$
\begin{aligned}
P(\A_k)(t)&=\frac{1-t^{2k}}{1-t^2},&P_L(\A_k)(t)&=\frac{1-t^{2k-2}}{1-t^2};\\
P(\D_m)(t)&=\frac{(1+t^{m-2})(1-t^{m})}{1-t^2},&P_L(\D_m)(t)&=\frac{(1+t^{m-4})(1-t^{m})}{1-t^2};\\
P(\E_6)(t)&=\frac{(1+t^4)(1-t^9)}{1-t^3},&P_L(\E_6)(t)&=\frac{(1+t^4)(1-t^6)+1-t^9}{1-t^3};\\
P(\E_7)(t)&=\frac{(1+t^3)(1-t^7)}{1-t^2},&P_L(\E_7)(t)&=\frac{(1+t^3)(1+t)(1-t^4)}{1-t^2};\\
P(\E_8)(t)&=\frac{(1+t^5)(1-t^{12})}{1-t^3},&P_L(\E_8)(t)&=\frac{(1+t^5)(1-t^9)+1-t^{12}}{1-t^3}.
\end{aligned}
$$
\end{proposition}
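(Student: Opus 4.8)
The plan is to verify both lists of identities by explicit computation, handling the $\A$, $\D$ and $\E$ series one at a time.

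For the polynomials $P(S)$ I would exhibit a monomial basis of $A(S)=\C[[\bx]]/(\d f/\d\bx)$ and collect the weighted degrees coming from $\omega$ in \eqref{weights}. For $\A_k$ the Jacobian ideal is $(x^k)$, with basis $1,x,\dots,x^{k-1}$; for $\E_6$ and $\E_8$ the Jacobian ideals are $(x^2,y^3)$ and $(x^2,y^4)$, with bases the evident monomials $x^iy^j$; for $\D_m$ and $\E_7$ one first rewrites the generators of the Jacobian ideal as the relations $xy=0$, $x^2=-(m-1)y^{m-2}$, respectively $3x^2=-y^3$, $xy^2=0$, in $A(S)$, deduces the further relations $y^{m-1}=0$, respectively $x^3=0$ and $y^5=0$, and reads off the bases $1,x,y,y^2,\dots,y^{m-2}$ and $1,x,y,xy,y^2,y^3,y^4$. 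Summing the resulting geometric series and factoring yields each closed form for $P(S)$; alternatively one substitutes the weights of \eqref{weights} into the classical product formula $\prod_i(1-t^{d-w_i})/(1-t^{w_i})$ for the Poincar\'e polynomial of the Milnor algebra of a quasihomogeneous isolated singularity of degree $d$ in variables of weights $w_i$.

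For the polynomials $P_L(S)$ I would use the following elementary description of $L(S)=\mathrm{Der}(A(S))$, with $J=(\d f/\d x_1,\dots,\d f/\d x_n)$: a derivation $D=\sum_i a_i\,\d/\d x_i$ of $\C[[\bx]]$ induces one of $A(S)$ if and only if $\sum_i a_i\,\d^2 f/(\d x_i\,\d x_j)\in J$ for every $j$ (a condition well posed modulo $J$, which forces $D$ to preserve $J$), and two such induce the same derivation of $A(S)$ exactly when they differ by one with all $a_i\in J$. Thus $L(S)$ is the kernel of the Hessian matrix of $f$, reduced modulo $J$, acting on $A(S)^n$, graded by $\deg(\d/\d x_i)=-\omega(x_i)$. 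For each simple singularity this is a finite linear-algebra problem: for $\A_k$ the condition on $a_1$ is $a_1\in(x)$, giving the basis $x\,\d/\d x,\dots,x^{k-1}\,\d/\d x$; for $\E_6$ and $\E_8$ the descent conditions become transparent divisibility constraints (the $\d/\d x$-component divisible by $x$, the $\d/\d y$-component by $y$, up to the appropriate power of the remaining variable); for $\D_m$ and $\E_7$, where $J$ is not monomial, one reduces $D(\d f/\d x)$ and $D(\d f/\d y)$ modulo $J$ using the relations above and solves the resulting linear system, with the weighted Euler derivation $\sum_i\omega(x_i)x_i\,\d/\d x_i$ always furnishing a degree-$0$ solution. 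In every case one then lists the weighted degrees of a basis of $L(S)$ and identifies their generating polynomial, after summing geometric series, with the asserted closed form for $P_L(S)$.

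The computation is routine; the care is concentrated in the cases $\D_m$ with $m$ a parameter and $\E_7$, where $J$ is not generated by monomials, so the reduction of the Hessian conditions modulo $J$ and the bookkeeping of weighted degrees of the basis derivations must be done attentively, and the parity of $m$ enters when rewriting $(1+t^{m-4})(1-t^m)/(1-t^2)$ --- a genuine polynomial for all $m\ge 4$, since the numerator vanishes at $t=\pm1$ --- as an honest Poincar\'e polynomial. The only non-computational point is the description of $L(S)$ recalled above: that every derivation of the Artinian algebra $A(S)$ is induced by one of $\C[[\bx]]$ and is pinned down by the Hessian condition. This follows because $A(S)$ is generated by the classes of the $x_i$, so a derivation lifts on generators; applying the chain rule to the relations $\d f/\d x_j\equiv0$ in $A(S)$ shows the lift automatically satisfies the Hessian condition; and changing the $a_i$ by elements of $J$ affects neither the condition nor the induced derivation.
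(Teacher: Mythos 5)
Your proposal is correct, but it is more self-contained than what the paper actually does: the paper's ``proof'' of Proposition~\ref{table} consists of the remark that the formulas for $\A_k$, $\D_m$, $\E_7$ agree with those of \cite{ElKh} and that the two remaining cases $\E_6$, $\E_8$ (where the gradings differ from \cite{ElKh}) follow by direct calculation, whereas you rederive all five families from scratch. Your one structural ingredient --- the identification of $L(S)=\mathrm{Der}(A(S))$ with the kernel of the Hessian matrix of $f$ acting on $A(S)^n$, graded by $\deg(\d/\d x_i)=-\omega(x_i)$ --- is a standard fact and your justification of it (lift a derivation on the generators $\bar x_i$; the lift automatically preserves the Jacobian ideal because the relations $\d f/\d x_j\equiv 0$ hold in $A(S)$; changing the coefficients by elements of $J$ changes nothing) is sound, the only point worth making explicit being that since $A(S)$ is Artinian one has $\mathfrak m^N\subseteq J$, so the comparison of the lifted derivation with the given one extends from polynomials to power series. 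The concrete data you list check out: the bases $1,x,y,\dots,y^{m-2}$ for $\D_m$ and $1,x,y,xy,y^2,y^3,y^4$ for $\E_7$, the annihilator/divisibility conditions in the monomial cases, and the linear systems for $\D_m$ and $\E_7$ (e.g.\ for $\E_7$ one finds eight basis derivations of weights $0,1,2,3,3,4,5,6$, giving $(1+t)(1+t^2)(1+t^3)=(1+t^3)(1+t)(1-t^4)/(1-t^2)$, and for $\D_m$ the expected $m$ derivations). What your route buys is independence from \cite{ElKh} and an explicit check that the stated gradings are the ones used; what it costs is exactly the routine but unavoidable bookkeeping for the non-monomial ideals of $\D_m$ (with $m$ a parameter) and $\E_7$, which you correctly flag but leave to be carried out.
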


These formul\ae\ conform with those from \cite{ElKh} except for
$\E_6$ and $\E_8$, as for the latter the non-quasihomogeneous
gradings are used in \cite{ElKh}. In these two remaining cases, the
polynomials are easily obtained by direct calculation.

We define semisimple singularities of type $\A\oplus\D$ to be the
direct sums of any number of $\A_k$'s and $\D_m$'s.

\begin{theorem}\label{main}
For the above choice of weights the Poincar\'e polynomial $P_L^+(S)$
of the Lie algebra of a semisimple singularity $S$ of type
$\A\oplus\D$ is unimodular, i.~e. has all roots on the unit circle
$\{|t|=1\}$.

For the same choice of weights the Poincar\'e polynomial
$P_L^+(S\oplus\E_7^{\oplus l})$ of the Lie algebra of the direct sum
of a semisimple singularity $S$ of type $\A\oplus\D$ and any number
$l$ of copies of $\E_7$ is either unimodular or has exactly  four
roots outside the unit circle $\{|t|=1\}$.
\end{theorem}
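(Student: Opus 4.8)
The plan is to pass to the variable $s=t+t^{-1}$. Each $P(S_j)$ and $P_L(S_j)$ of Proposition~\ref{table} with $S_j\in\{\A_k,\D_m,\E_7\}$ is a palindromic polynomial of even degree, and from Proposition~\ref{table} one reads off that $\deg P(S_j)-\deg P_L(S_j)=2$ in each of these cases. Hence all the summands $P_L(S_j)\prod_{i\ne j}P(S_i)$ of $P_L^+(S')$ --- where $S'$ is $S$ of type $\A\oplus\D$, respectively $S\oplus\E_7^{\oplus l}$ --- have one and the same degree and positive leading coefficients, so $P_L^+(S')$ is palindromic of some even degree $2d$ and therefore of the form $P_L^+(S')(t)=t^{d}\widehat H(s)$ with $\widehat H$ a polynomial of degree $d$. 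Since $t\mapsto s$ identifies the unit circle with $[-2,2]$ and maps its complement two-to-one onto $\C\setminus[-2,2]$, the polynomial $P_L^+(S')$ is unimodular precisely when all roots of $\widehat H$ lie in $[-2,2]$, and it has exactly four roots off the unit circle precisely when $\widehat H$ has exactly two roots (with multiplicity) outside $[-2,2]$. Writing $p\mapsto\hat p$ for the transform $p(t)=t^{(\deg p)/2}\hat p(s)$ on palindromic polynomials --- it is multiplicative, respects exact division, is additive on palindromic polynomials of equal degree, and preserves positivity of the leading coefficient --- and putting $g_j=\widehat{P(S_j)}$, $h_j=\widehat{P_L(S_j)}$, Block's formula together with these properties gives $\widehat H=\sum_j h_j\prod_{i\ne j}g_i$, i.e.\ $\widehat H\big/\prod_j g_j=\sum_j\rho_j$ with $\rho_j:=h_j/g_j$; moreover $\deg\widehat H=\sum_i\deg g_i-1$ exactly, the leading coefficients of the summands being positive and hence not cancelling.

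A short computation from Proposition~\ref{table} identifies $\rho_{\A_k}=U_{k-2}(s/2)/U_{k-1}(s/2)$, $\rho_{\D_m}=T_{(m-4)/2}(s/2)/T_{(m-2)/2}(s/2)$ for $m$ even, $\rho_{\D_m}=V_{(m-5)/2}(s/2)/V_{(m-3)/2}(s/2)$ for $m$ odd, and $\rho_{\E_7}(s)=s(s+2)/(s^{3}+s^{2}-2s-1)$, where $T_n,U_n,V_n$ are the Chebyshev polynomials of the first, second and third kind; correspondingly each $g_j$ is, up to a positive scalar, a product of such Chebyshev polynomials (for $\E_7$, of $(s-1)$ and $s^{3}+s^{2}-2s-1$), so every root of every $g_j$ lies in the open interval $(-2,2)$. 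The key structural fact is that for each $\A$- or $\D$-summand $\rho_j$ is \emph{positive-definite}: its partial-fraction expansion $\sum c/(s-\lambda)$ has all poles $\lambda$ real and in $(-2,2)$ and all residues $c>0$. This follows by induction from the common three-term recurrence of $T,U,V$, which turns the above ratios into the continued fraction $\rho^{(n+1)}=1/(s-\rho^{(n)})$ with positive-definite initial term $1/s$, $2/s$, or $1/(s-1)$, using that if $-R$ maps the open upper half-plane into itself then so does $-1/(s-R)$, together with the classical fact that the roots of $T_n,U_n,V_n$ lie in $(-1,1)$. Now if $S$ is of type $\A\oplus\D$, the function $\sum_j\rho_j$ is a sum of positive-definite rational functions, hence itself positive-definite, vanishes at infinity, and has all its poles in $(-2,2)$; such a function is strictly decreasing on each interval between consecutive poles and tends to $0$ beyond the extreme poles, so all of its finitely many zeros are real and lie strictly between its smallest and largest pole, in particular in $(-2,2)$. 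Since $\widehat H=(\sum_j\rho_j)\cdot\prod_j g_j$ with both factors having all roots in $(-2,2)$, and since $\deg\widehat H$ is as computed above, every root of $\widehat H$ lies in $(-2,2)$; this proves the first assertion, indeed with no root at $\pm1$.

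For $S'=S\oplus\E_7^{\oplus l}$ I would first write $P_L^+(S')=P(\E_7)^{\,l-1}\,R$ with $R:=P(\E_7)P_L^+(S)+l\,P(S)P_L(\E_7)$, a palindromic polynomial, so that the roots of $P_L^+(S')$ off the unit circle are exactly the roots of $Q:=\hat R=g_{\E_7}P_0+l\,h_{\E_7}\pi_0$ lying outside $[-2,2]$, where $P_0=\widehat{P_L^+(S)}$ and $\pi_0=\widehat{P(S)}$ (if $S$ has no $\A\D$-part then $P_0=0$, $\pi_0=1$ and $Q=l\,h_{\E_7}$ has all roots in $[-2,2]$, so $P_L^+(S')$ is unimodular; assume henceforth $S$ nontrivial). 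Dividing, $Q\big/(\pi_0 g_{\E_7})=\Sigma_0+l\rho_{\E_7}=:\Sigma$ with $\Sigma_0:=\sum_{\A\D}\rho_j$ positive-definite; as the poles of $\Sigma$ all lie in $(-2,2)$, the roots of $Q$ outside $[-2,2]$ are exactly the zeros of $\Sigma$ outside $[-2,2]$. A residue computation shows that $\rho_{\E_7}$ has positive residue at two of its three poles and negative residue at exactly one, namely at $s=2\cos\tfrac{6\pi}{7}=-2\cos\tfrac{\pi}{7}$; therefore $\Sigma$ is a rational function vanishing at infinity, with $q$ simple real poles $\nu_1<\dots<\nu_q$ in $(-2,2)$, all residues positive except one, say at $\nu_{i_0}$, and with $q-1$ zeros. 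On each interval $(\nu_i,\nu_{i+1})$ with $i\notin\{i_0-1,i_0\}$ the function $\Sigma$ runs from $+\infty$ to $-\infty$, so it has there an odd number, in particular at least one, of zeros; these account for at least $q-3$ real zeros of $\Sigma$ in $(-2,2)$, whence at most two zeros of $\Sigma$ --- equivalently at most two roots of $Q$ --- lie outside $[-2,2]$. Finally a parity argument sharpens ``at most two'' to ``zero or two'': one has $g_{\E_7}(2)>0$, $h_{\E_7}(2)>0$, $g_{\E_7}(-2)>0$, $h_{\E_7}(-2)=0$, while $P_0(2)=\dim L(S)>0$, $\pi_0(2)=\dim A(S)>0$, and $P_0(-2)=(-1)^{\deg P_0}P_L^+(S)(-1)$ with $P_L^+(S)(-1)>0$ (each value $P(S_j)(-1)$ and $P_L(S_j)(-1)$ being a positive integer, as one computes from Proposition~\ref{table}); since $\deg Q=\deg P_0+4$ this yields $Q(2)>0$ and $\operatorname{sign}Q(-2)=(-1)^{\deg Q}$, so $Q$ has an even number of real roots in $(2,\infty)$ and an even number in $(-\infty,-2)$. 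Together with the fact that the non-real roots of $Q$ occur in conjugate pairs, this forces the number of roots of $Q$ outside $[-2,2]$ to be even, hence $0$ or $2$; correspondingly $P_L^+(S')$ is unimodular or has exactly four roots off the unit circle.

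The step I expect to be the genuine obstacle is the sign-change count just used: it has to be made airtight in the degenerate configurations --- when the exceptional pole $\nu_{i_0}$ is one of the extremal poles $\nu_1,\nu_q$; when $-2\cos\tfrac{\pi}{7}$ coincides with a root of some $\A\D$-polynomial $g_j$, so that the residue of $\Sigma$ there is $(\text{positive residue of }\Sigma_0)+l\cdot(\text{negative})$ and may stay positive or turn negative; and when roots of distinct $g_j$'s coincide, producing higher-order poles of $\prod_j g_j$ but only simple poles of $\Sigma$ --- and one has to verify that the at most two ``unaccounted'' zeros of $\Sigma$ are correctly placed, either as a conjugate pair of non-real numbers or as real numbers in the two unbounded intervals $(-\infty,\nu_1)$ and $(\nu_q,\infty)$. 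The first assertion and the Chebyshev and continued-fraction bookkeeping are, by comparison, routine.
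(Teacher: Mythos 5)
Your proposal is correct, and it is essentially the paper's own proof transported to the variable $s=t+t^{-1}$: you factor out the unimodular product of the $P(S_j)$, reduce to the sum of ratios $Q=\sum P_L(S_j)/P(S_j)$ viewed as a real function on the circle, show that every $\A$- or $\D$-summand contributes only poles with residues of one fixed sign while $\E_7$ contributes exactly one exceptional pole (at $s=-2\cos(\pi/7)$, i.e.\ $x=3\pi/7$), count zeros between like-signed poles against the total forced by the vanishing of $Q$ at infinity, and settle the $\E_7$ case by a parity argument from the signs at $s=\pm2$ --- which is precisely what the paper does on $[0,\pi/2]$ with $t=e^{2ix}$ via Lemmas \ref{lem:roots} and \ref{lem:roots2}. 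The degenerate configurations you flag at the end are harmless and are absorbed by the same count (this is exactly what the bound $|n_+-n_-|-1$ in the paper's lemma silently does): if the negative $\E_7$ residue is cancelled or outweighed by a coinciding positive $\A$/$\D$ residue, or if the exceptional pole is extremal, the number of excluded intervals only decreases, so the estimate of at most two zeros of $Q$ outside $[-2,2]$, and hence the conclusion ``zero or four roots off the unit circle,'' still holds.
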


Note that $\E_6=\A_2\oplus\A_3$ and $\E_8=\A_2\oplus\A_4$. However,
the gradings of $\E_6$ and $\E_8$ in \eqref{weights} differ from
gradings of $\A_2\oplus\A_3$ and $\A_2\oplus\A_4$ in Theorem
\ref{main}. Indeed, for the direct sums of $\A_k$ the weights of all
variables in Theorem \ref{main} are chosen to be equal. For the
quiasihomogeneous weights the Poincar\'e polynomials $P_L(\E_6)$ and
$P_L(\E_8)$ are not palindromic, therefore not unimodular (see
Definitions \ref{def:pali}, \ref{def:uni} below).

\newpage

\section{Proofs}
\subsection{Palindromic polynomials}\label{ssec:generalities}

\begin{Definition}\label{def:pali}
A polynomial $\sum_{k=0}^na_kt^k$ of degree $n$ is called
\emph{palindromic} if $a_k=a_{n-k}$ for all $k$.
\end{Definition}

\begin{Definition}\label{def:uni}
A polynomial is called \emph{unimodular} if all its roots lie on the
unit circle.
\end{Definition}

A real unimodular polynomial $P(t)$ is either palindromic or becomes
palindromic after division by $1-t$.

Remarkably, Poincar\'e polynomials $P(S)$ and $P_L(S)$ for
singularities $S=$ $\A_k$, $\D_k$ and $\E_7$ are palindromic (this
is a consequence of the duality existing on the moduli algebras) and
moreover unimodular, see \cite{ElKh}.

Any product of two palindromic polynomials and a sum of palindromic
polynomials \emph{of the same degree} is again palindromic.
Therefore the Poincar\'e polynomials considered in Theorem
\ref{main} are palindromic.

\begin{lemma}
The function  $t^{-d}P(t)$, where $P$ is a palindromic polynomial of
degree $2d$ with real coefficients, takes real values on the unit
circle $\{|t|=1\}$.
\end{lemma}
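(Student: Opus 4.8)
The plan is to exploit the defining symmetry $a_k=a_{2d-k}$ directly. Writing $P(t)=\sum_{k=0}^{2d}a_kt^k$ with $a_k=a_{2d-k}\in\R$, I would factor out $t^d$ and pair up the terms symmetric about the middle:
\[
t^{-d}P(t)=a_d+\sum_{k=0}^{d-1}a_k\(t^{k-d}+t^{d-k}\).
\]
On the unit circle, write $t=e^{i\theta}$; then $t^{k-d}+t^{d-k}=2\cos\bigl((d-k)\theta\bigr)$, which is real, and $a_d$ is real. Hence $t^{-d}P(t)=a_d+2\sum_{k=0}^{d-1}a_k\cos\bigl((d-k)\theta\bigr)\in\R$.

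Alternatively, and perhaps more cleanly for the write-up, I would argue via complex conjugation: for $|t|=1$ we have $\bar t=t^{-1}$, so $\overline{t^{-d}P(t)}=\bar t^{-d}\,\overline{P(t)}=t^{d}\,P(\bar t)=t^d P(t^{-1})$ (using that $P$ has real coefficients). Then the palindromic identity $P(t^{-1})=t^{-2d}P(t)$, which is just $a_k=a_{2d-k}$ rewritten, gives $\overline{t^{-d}P(t)}=t^d\cdot t^{-2d}P(t)=t^{-d}P(t)$, so the value equals its own conjugate and is therefore real.

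I would include the short verification of the palindromic identity $t^{2d}P(1/t)=P(t)$: this is immediate since $t^{2d}P(1/t)=\sum_k a_k t^{2d-k}=\sum_j a_{2d-j}t^j=\sum_j a_j t^j=P(t)$. There is no real obstacle here — the statement is elementary — so the only thing to be careful about is bookkeeping with the exponents, making sure the substitution $\bar t = t^{-1}$ is used only on $|t|=1$ and that the reality of the coefficients is invoked exactly once (to move the bar inside $P$). I would present the conjugation argument as the main proof since it is the shortest, and perhaps remark that it shows more generally that $t^{-d}P(t)$ is real-valued on $|t|=1$ for any $P$ with real coefficients satisfying $P(t)=t^{2d}P(1/t)$, even if $P$ is not literally a polynomial of degree $2d$ (e.g. a palindromic Laurent polynomial), which is the form in which the Poincar\'e polynomials of the theorem naturally arise.
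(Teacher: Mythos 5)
Your proposal is correct, and your first argument (pairing the coefficients symmetric about the middle and using that $t^{k}+t^{-k}$ is real on $|t|=1$) is exactly the proof given in the paper, up to reindexing. The conjugation variant you prefer --- using $\bar t=t^{-1}$ on the unit circle together with $P(t)=t^{2d}P(1/t)$ to show the value equals its own conjugate --- is a genuinely equivalent but differently packaged argument; it is equally short, and its only advantage is the one you note, namely that it applies verbatim to any real Laurent polynomial satisfying the symmetry, whereas the paper's pairing computation also makes this reality explicit as a cosine expansion, which is what the paper actually uses next (the function $\ph(x)=e^{-2dix}Q(e^{2ix})$ is analyzed as a trigonometric rational function of $\cos(kx)$). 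Either write-up is acceptable; no gaps.
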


\begin{proof}
Indeed, $t^{-d}P(t)=a_d+\sum_{k=1}^d a_{d-k}(t^k+t^{-k})$. Since
$t^k+t^{-k}$ is real on the unit circle and $a_k$ are real by
assumption, the result follows.
\end{proof}

Any palindromic polynomial $P(t)$ of odd degree is a product of
$1+t$ and a palindromic polynomial of even degree. Therefore  the
following is a corollary of the previous Lemma.

\begin{lemma}\label{lem:ratios of pali}
Let $Q=\sum\frac{P_j}{{\tilde P}_j}$ be a rational function, where
$P_j$, ${\tilde P}_j$ are palindromic polynomials with real
coefficients, and assume that the differences $\deg P_j-\deg{\tilde
P}_j$ are all equal to some number $2d$. Then $t^{-d}Q(t)$ takes
real values on the unit circle $\{|t|=1\}$.
\end{lemma}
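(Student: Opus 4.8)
The plan is to reduce everything to the preceding Lemma by a parity argument applied term by term. Fix an index $j$ and set $n_j=\deg P_j$, $m_j=\deg{\tilde P}_j$, so that $n_j-m_j=2d$; in particular $n_j$ and $m_j$ have the same parity, which makes the two cases below exhaustive.

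First I would treat the case when both $n_j$ and $m_j$ are even, say $n_j=2p_j$ and $m_j=2q_j$ with $p_j-q_j=d$. By the previous Lemma, $t^{-p_j}P_j(t)$ and $t^{-q_j}{\tilde P}_j(t)$ both take real values on $\{|t|=1\}$, hence so does their quotient
$$
t^{-d}\,\frac{P_j(t)}{{\tilde P}_j(t)}=\frac{t^{-p_j}P_j(t)}{t^{-q_j}{\tilde P}_j(t)}
$$
at every point of the unit circle where ${\tilde P}_j$ does not vanish. When $n_j$ and $m_j$ are both odd, I would invoke the remark recorded above: write $P_j=(1+t)P_j'$ and ${\tilde P}_j=(1+t){\tilde P}_j'$ with $P_j'$, ${\tilde P}_j'$ palindromic of even degree; then $P_j/{\tilde P}_j=P_j'/{\tilde P}_j'$, the difference $\deg P_j'-\deg{\tilde P}_j'$ is still $2d$, and we are back in the previous case.

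Summing over $j$, the function $t^{-d}Q(t)=\sum_j t^{-d}P_j(t)/{\tilde P}_j(t)$ is real at every point of $\{|t|=1\}$ outside the finite set of zeros of the polynomials ${\tilde P}_j$. Since $t^{-d}Q(t)$ is a rational function, the relation $t^{-d}Q(t)=\overline{t^{-d}Q(t)}$, equivalently $Q(t)=t^{2d}Q(1/t)$, then holds on a dense subset of the circle and hence as an identity of rational functions, so $t^{-d}Q(t)$ is real wherever it is defined on the unit circle.

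I do not expect a genuine obstacle here. The only points needing a little care are the parity bookkeeping — the hypothesis $n_j-m_j=2d$ is precisely what forces $n_j\equiv m_j\pmod 2$ — and the harmless passage from ``real off the zeros of the ${\tilde P}_j$'' to ``real wherever defined''. For the latter one may also argue directly without the dense-subset step: for a palindromic polynomial $P$ of degree $n$ one has $t^nP(1/t)=P(t)$, whence $t^{2d}P_j(1/t)/{\tilde P}_j(1/t)=P_j(t)/{\tilde P}_j(t)$ term by term and therefore $t^{2d}Q(1/t)=Q(t)$ as rational functions; on $|t|=1$ this reads $\overline{t^{-d}Q(t)}=t^{-d}Q(t)$.
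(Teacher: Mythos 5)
Your argument is correct and is essentially the paper's own: the paper derives this lemma from the preceding one via exactly the same remark that an odd-degree palindromic polynomial is $(1+t)$ times an even-degree palindromic one, then takes quotients term by term. Your write-up simply makes the parity bookkeeping and the extension across zeros of the $\tilde P_j$ explicit, which the paper leaves implicit.
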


For such rational functions $Q$ the function
$\ph(x)=e^{-2dix}Q(e^{2ix})$ is a real $\pi$-periodic function on
$\R$. Moreover, since $Q$ is real, the function $\ph(x)$ is even. In
fact, $\ph(x)$ is a rational function of the $\cos(kx)$, $k\in\Z$.

\begin{lemma}\label{lem:roots}
Let $\ph$ be as above and assume that $\ph$ has only simple poles on
some interval $[a,b]$. Denote by $n_+$ (resp. $n_-$) the number of
poles of $\ph$ on an interval $[a,b]$ with positive (resp. negative)
residue. Then the number of different zeros of $\ph$ on $[a,b]$ is
at least $|n_+-n_-|-1$.
\end{lemma}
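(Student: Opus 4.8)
The plan is to exploit the fact that a simple pole of $\ph$ with positive residue and a simple pole with negative residue force a sign change of $\ph$ on any interval between them that contains no other pole, and that passing such a pole \emph{without} a sign change is impossible. More precisely, near a simple pole $x_0$ with residue $r$ the function behaves like $r/(x-x_0)$, so $\ph$ tends to $+\infty$ on one side and $-\infty$ on the other, the sign of the blow-up being determined by $\operatorname{sign}(r)$ and the side. First I would order the poles of $\ph$ in $[a,b]$ as $a\le p_1<p_2<\dots<p_N\le b$ (there are finitely many since $\ph$ is a rational function of the $\cos(kx)$, hence real-analytic away from its poles and with poles of the ambient rational function $Q$ only at finitely many points of the unit circle). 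On each open interval $(p_i,p_{i+1})$ the function $\ph$ is continuous, so by the intermediate value theorem the number of its zeros there is at least one whenever $\ph$ has opposite signs near $p_i^+$ and near $p_{i+1}^-$.

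The key bookkeeping step is to track the sign of $\ph$ as $x$ increases through the poles. Introduce $\sigma_i^- = \operatorname{sign}\ph(p_i^-)$ and $\sigma_i^+=\operatorname{sign}\ph(p_i^+)$, i.e. the signs of $\ph$ just to the left and just to the right of $p_i$; from the local model $\ph\sim r_i/(x-p_i)$ one gets $\sigma_i^- = -\operatorname{sign}(r_i)$ and $\sigma_i^+ = \operatorname{sign}(r_i)$, so crossing a pole always \emph{flips} the sign of $\ph$, and the flip is from $-\operatorname{sign}(r_i)$ to $+\operatorname{sign}(r_i)$. Now walk from $p_1$ to $p_N$: between consecutive poles, if $\sigma_i^+\ne\sigma_{i+1}^-$ there must be at least one zero in $(p_i,p_{i+1})$. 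The worst case for us (fewest forced zeros) is when $\sigma_i^+=\sigma_{i+1}^-$ as often as possible, i.e. when consecutive poles have as many sign agreements as possible. A clean way to package this: consider the sequence of signs $\operatorname{sign}(r_1),\operatorname{sign}(r_2),\dots,\operatorname{sign}(r_N)$ and count $D$ = the number of indices $i$ with $\operatorname{sign}(r_i)\ne\operatorname{sign}(r_{i+1})$ \emph{together with} the forced interior zeros. After unwinding the relation $\sigma_i^+=\operatorname{sign}(r_i)$, $\sigma_{i+1}^-=-\operatorname{sign}(r_{i+1})$, one finds $\sigma_i^+\ne\sigma_{i+1}^-$ precisely when $\operatorname{sign}(r_i)=\operatorname{sign}(r_{i+1})$; hence the number of forced zeros in the interior is at least the number of sign \emph{agreements} among consecutive residues, which is $(N-1)$ minus the number of sign changes.

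To finish, I would combine this with the observation that the total number of sign changes in a $\pm1$-sequence of length $N$ with $n_+$ plus-signs and $n_-$ minus-signs is at most $2\min(n_+,n_-) = n_++n_- - |n_+-n_-| = N - |n_+-n_-|$. Therefore the number of consecutive agreements is at least $(N-1)-(N-|n_+-n_-|)=|n_+-n_-|-1$, which gives at least $|n_+-n_-|-1$ distinct zeros of $\ph$ in $[a,b]$, as claimed. I expect the main obstacle to be the careful sign bookkeeping at the poles — in particular making the local analysis $\ph(x)\sim r_i/(x-p_i)$ rigorous (using that poles are simple, so $\ph$ has a genuine one-sided limit $\pm\infty$ on each side with the sign governed by $\operatorname{sign}(r_i)$) and correctly translating ``sign change of $\ph$ across a gap'' into the combinatorial count above; one also has to handle harmlessly the boundary cases where $a$ or $b$ is itself a pole, which only strengthens the estimate. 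A secondary, purely technical point is justifying that $\ph$ has finitely many zeros and poles on $[a,b]$, which follows from $Q$ being a nonzero rational function (so $\ph$ is real-analytic and not identically zero on each pole-free subinterval).
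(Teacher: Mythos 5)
Your proposal is correct and follows essentially the same route as the paper: you force a zero by the intermediate value theorem on each interval between consecutive poles whose residues have the same sign (the paper's ``good'' intervals), and then bound the number of such intervals below by $|n_+-n_-|-1$ via the same combinatorial count of sign changes in the residue sequence. Your version merely makes explicit the sign bookkeeping and finiteness points that the paper leaves implicit.
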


\begin{proof}
Call an interval $(x_1,x_2)$ ``good'' if $x_1$, $x_2$ are two poles
of $\ph$ with residues of the same sign, and $\ph$ is continuous on
$(x_1,x_2)$. Evidently, $\ph$ has a zero in any good interval: $\ph$
tends to $+\infty$ at one endpoint, and to $-\infty$ at another.

The number of good intervals is at least $|n_+-n_-|-1$.
\end{proof}

In fact, one can prove a stronger result.

\begin{lemma}\label{lem:roots2}
Let $\ph$, $n_+$ and $n_-$ be as above and assume that $a,b$ are
neither zeros nor poles of $\ph$. Let $c$ be $1$ if $\ph(a)\ph(b)<0$
and $0$ otherwise.

Then the number of zeros of $\ph$ on $[a,b]$ counted with
multiplicities is at least $|n_+-n_-|-c$ and differs from the latter
expression by an even number.
\end{lemma}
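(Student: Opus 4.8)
The plan is to refine the counting argument of Lemma \ref{lem:roots} by tracking sign changes of $\ph$ more carefully, rather than merely extracting one zero from each ``good'' interval. First I would list the poles of $\ph$ on $[a,b]$ in increasing order as $p_1<\dots<p_N$, where $N=n_++n_-$, and record the sign $\sigma_i\in\{+,-\}$ of the residue at $p_i$. On each complementary open interval $(a,p_1)$, $(p_i,p_{i+1})$, $(p_N,b)$ the function $\ph$ is continuous (the poles are simple by hypothesis), and near a simple pole $\ph$ blows up to $+\infty$ on the side where the approach matches the residue sign and to $-\infty$ on the other side. Concretely, just to the right of $p_i$ the sign of $\ph$ is $\sigma_i$, and just to the left of $p_{i+1}$ the sign of $\ph$ is $-\sigma_{i+1}$. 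Hence on $(p_i,p_{i+1})$ the function changes sign an odd number of times exactly when $\sigma_i=\sigma_{i+1}$, and an even number of times (possibly zero) when $\sigma_i\ne\sigma_{i+1}$; in either case the number of zeros counted with multiplicity in that interval has a fixed parity determined by the residue signs at the two ends. The endpoint intervals $(a,p_1)$ and $(p_N,b)$ are handled the same way using the given signs $\ph(a)$, $\ph(b)$, which are nonzero by assumption.

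The key step is then a bookkeeping identity. Assign to each interval between consecutive poles the parity bit $\epsilon_i=0$ if $\sigma_i=\sigma_{i+1}$ and $\epsilon_i=1$ otherwise; assign to the left end interval the bit $\epsilon_0$ recording whether $\mathrm{sign}\,\ph(a)$ agrees with $-\sigma_1$ (so that an odd count is forced when they disagree), and similarly $\epsilon_N$ at the right end using $\ph(b)$ and $\sigma_N$. The total number of zeros of $\ph$ on $[a,b]$ counted with multiplicity is then congruent mod $2$ to $\sum_{i=0}^N(\text{the parity bit of interval }i)$, and is bounded below by the number of intervals whose bit forces an odd count, i.e.\ by $\sum$ of those bits. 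A short combinatorial computation shows that the sum of the interior bits equals the number of sign alternations in the sequence $\sigma_1,\dots,\sigma_N$, and that including the two boundary bits the whole sum has the same parity as $|n_+-n_-|-c$, while being at least $|n_+-n_-|-c$. The point is that consecutive like signs among the $\sigma_i$ each force a zero (this is exactly Lemma \ref{lem:roots}), while the boundary data contributes the correction term $c$: if $\ph(a)\ph(b)<0$ one of the two boundary intervals may ``absorb'' a forced sign change, lowering the guaranteed count by one, whereas if $\ph(a)\ph(b)>0$ no such absorption is possible and the parity works out so that the bound is $|n_+-n_-|$ up to an even discrepancy.

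I would present this as follows: (i) reduce to the sign sequence $\sigma_1,\dots,\sigma_N$ at the poles together with the signs $\ph(a),\ph(b)$; (ii) observe that in each of the $N+1$ intervals the number of zeros counted with multiplicity has a prescribed parity, equal to $1$ precisely on the intervals I called ``forcing''; (iii) sum these parities and note the total zero count is $\ge$ (number of forcing intervals) and $\equiv$ (that number) $\pmod 2$; (iv) compute that the number of forcing intervals equals $|n_+-n_-|-c$ modulo $2$ and is $\ge |n_+-n_-|-c$, by casework on whether $\sigma_1$, $\sigma_N$, $\mathrm{sign}\,\ph(a)$, $\mathrm{sign}\,\ph(b)$ and the majority sign line up. Step (iv) is where I expect the main obstacle to be: the worst case is when $\ph(a)$ or $\ph(b)$ has the \emph{minority} residue sign, so that a boundary interval both is forcing and sits adjacent to a run of the majority sign; one has to check that in all such configurations the parity and the inequality still come out as claimed, and in particular that $c$ is the exact correction and not, say, $2$. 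Everything else is the standard intermediate-value argument already used in Lemma \ref{lem:roots}, now applied interval by interval with attention to parities.
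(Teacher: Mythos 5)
Your argument is correct in outline but takes a genuinely different route from the paper: the paper's proof is degree-theoretic --- it extends $\ph$ to a smooth map $\Phi\colon S^1\to\R P^1$ with exactly $c$ extra simple zeros and no extra poles, and computes $\deg\Phi$ twice, over $\infty$ (giving $n_--n_+$) and over $0$ (giving $m_+-m_-\pm c$, where $m_\pm$ count odd-multiplicity zeros by the sign of the crossing) --- whereas you do an elementary interval-by-interval intermediate-value and parity bookkeeping on $[a,b]$ itself. Your version is more hands-on and avoids any appeal to degree, at the price of the combinatorial verification in your step (iv); the paper's version gets the parity statement for free from the two computations of the degree.

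Two points need fixing or finishing. First, your parity bits are defined backwards: you correctly derive that $(p_i,p_{i+1})$ carries an odd zero count (with multiplicity) exactly when $\sigma_i=\sigma_{i+1}$, but you then set $\epsilon_i=1$ when $\sigma_i\ne\sigma_{i+1}$, so that ``the sum of the interior bits equals the number of sign alternations''. That is the complement of what you need: the number of alternations is neither a lower bound for the zero count nor of the right parity in general (already for $N=2$, $\sigma_1=\sigma_2$ it fails), and the later assertion that the bit sum is at least $|n_+-n_-|-c$ is false as stated. The bits must be $1$ precisely on the \emph{forcing} intervals, i.e.\ where the one-sided signs at the two ends disagree ($\sigma_i=\sigma_{i+1}$ for interior intervals; $\mathrm{sign}\,\ph(a)=\sigma_1$, resp.\ $\mathrm{sign}\,\ph(b)=-\sigma_N$, for the boundary ones) --- your own remark that consecutive like signs force a zero, as in Lemma \ref{lem:roots}, is the right statement. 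Second, step (iv), which you flag as the main obstacle but do not carry out, does close, and more cheaply than your sketch suggests: with $F$ the number of forcing intervals, the congruence $F\equiv N+c\equiv|n_+-n_-|-c\pmod 2$ is a short check splitting only on whether $\sigma_1=\sigma_N$ and whether $c=0$ or $1$ (using that there are just two signs), and then no further casework is needed for the inequality, since the interior forcing intervals alone number at least $|n_+-n_-|-1$ by the count underlying Lemma \ref{lem:roots}; this settles $c=1$, and for $c=0$ the parity excludes $F=|n_+-n_-|-1$, so $F\ge|n_+-n_-|$. Combined with the observations that $\ph$ is real-analytic and not identically zero between its poles (so each interval carries finitely many zeros, and the zero count there has the parity of the number of sign changes) and that $a,b$ are neither zeros nor poles, this completes your proof.
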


\begin{proof}
Choose a smooth extension $\Phi$  of $\ph$ to $S=\R/(|a-b|+1)\Z\cong
S^1$ having exactly $c$ additional simple zeros and without
additional poles. It defines a smooth mapping $\Phi: S\to \R
P^1=\R\cup\{\infty\}$. Taking standard orientations and looking on
$\Phi^{-1}(\infty)$ we conclude that $\deg\Phi=n_--n_+$.

Zeros of $\ph$ fall into three classes: zeros of odd multiplicity
where $\ph$ increases (denote their number by $m_+$), zeros of odd
multiplicity where $\ph$ decreases (denote their number by $m_-$),
and zeros of even multiplicity. Evidently $\deg\tilde\Phi=m_+-m_-\pm
c$, and the claim easily follows.
\end{proof}

\subsection{Proof of Theorem \ref{main}}

The Poincar\'e polynomial $P_L(S)$ for $S$ of type $\A\oplus\D$ has
the following form:
$$
P(t)=\left[\prod_{j=1}^kP(\A_{k_j})(t)\prod_{j=1}^M
P(\D_{m_j})(t)\right]Q(t),
$$
where
$Q(t)=\sum_{j=1}^k\frac{P_L(\A_{k_j})(t)}{P(\A_{k_j})(t)}+\sum_{j=1}^M\frac{P_L(\D_{m_j})(t)}{P(\D_{m_j})(t)}$.
All roots of the first two factors of $P$ lie on the unit circle, so
one has to prove that all roots of $Q$ lie on the unit circle as
well.

Note that $Q(t)$ is a sum of ratios of palindromic polynomials and
one has $\deg P_L(\A_{k_j})-\deg P(\A_{k_j})=\deg P_L(\D_{m_j})-\deg
P(\D_{m_j})=-2$.

Taking the table from Proposition \ref{table} into account, we can
compute $\ph(x)=e^{2ix}Q(e^{2ix})$:
$$
\ph(x)=\sum_{j=1}^k\frac{\sin((2k_j-2)x)}{\sin(2k_jx)}+\sum_{j=1}^M\frac{\cos((m_j-4)x)}{\cos((m_j-2)x)}.
$$
Note that $\ph$ is an even $\pi$-periodic trigonometric function.

\begin{lemma}
Residues of poles of the functions
$f^\A_{k_j}=\frac{\sin((2k_j-2)x)}{\sin(2k_jx)}$ and
$f^\D_{m_j}=\frac{\cos((m_j-4)x)}{\cos((m_j-2)x)}$ are negative for
$x\in(0,\pi/2)$.
\end{lemma}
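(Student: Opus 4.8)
The plan is to locate the poles explicitly and then evaluate each residue by the elementary rule $\operatorname{res}_{x_0}(g/h)=g(x_0)/h'(x_0)$, valid whenever $x_0$ is a simple zero of $h$ with $g(x_0)\neq 0$; a couple of trigonometric identities should then reduce the outcome to a manifestly signed quantity. So the first step is to check simplicity of the relevant zeros of the denominators, and the second is to carry out these reductions.

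\textbf{The $\A$ case.} The poles of $f^\A_{k_j}$ in $(0,\pi/2)$ are the points $x_\ell=\pi\ell/(2k_j)$ with $\ell=1,\dots,k_j-1$, where $\sin(2k_jx)$ vanishes; there $\cos(2k_jx_\ell)=(-1)^\ell\neq 0$, so these zeros of the denominator are simple. Since $(2k_j-2)x_\ell=\pi\ell-\pi\ell/k_j$, one gets $\sin((2k_j-2)x_\ell)=-(-1)^\ell\sin(\pi\ell/k_j)$, so the residue at $x_\ell$ equals
$$
\frac{\sin((2k_j-2)x_\ell)}{2k_j\cos(2k_jx_\ell)}=-\frac{\sin(\pi\ell/k_j)}{2k_j},
$$
which is strictly negative because $0<\pi\ell/k_j<\pi$. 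This also shows that the numerator does not vanish there, so each $x_\ell$ is a genuine (simple) pole.

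\textbf{The $\D$ case.} The poles of $f^\D_{m_j}$ in $(0,\pi/2)$ are the points $x_\ell=\pi(2\ell+1)/(2(m_j-2))$ subject to $2\ell+1<m_j-2$, where $\cos((m_j-2)x)$ vanishes; there $\sin((m_j-2)x_\ell)=\sin(\pi/2+\pi\ell)=(-1)^\ell\neq 0$, so again these zeros are simple. Writing $(m_j-4)x_\ell=(m_j-2)x_\ell-2x_\ell$ and using $\cos((m_j-2)x_\ell)=0$ yields $\cos((m_j-4)x_\ell)=(-1)^\ell\sin(2x_\ell)$, hence the residue at $x_\ell$ equals
$$
\frac{\cos((m_j-4)x_\ell)}{-(m_j-2)\sin((m_j-2)x_\ell)}=-\frac{\sin(2x_\ell)}{m_j-2}<0,
$$
since $0<2x_\ell<\pi$.

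I do not expect a genuine obstacle here: the only delicate points are confirming that the denominators have only simple zeros on the interval (settled by the values $\cos(\pi\ell)=(-1)^\ell$ and $\sin(\pi/2+\pi\ell)=(-1)^\ell$) and organizing the trigonometric reductions so that the sign of the resulting sine factor is visibly positive on the stated range. It is worth recording the explicit residues $-\sin(\pi\ell/k_j)/(2k_j)$ and $-\sin(2x_\ell)/(m_j-2)$, not just their signs, since this residue data is exactly what will be fed into Lemmas \ref{lem:roots} and \ref{lem:roots2} when counting the real zeros of $\ph$ on $(0,\pi/2)$ in the remainder of the proof of Theorem \ref{main}.
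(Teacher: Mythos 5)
Your proof is correct and follows essentially the same route as the paper: locate the poles of the denominators, compute residues via $g(x_0)/h'(x_0)$, and reduce the numerator with an angle-subtraction identity so the sign is determined by $\sin 2x_\ell>0$ on $(0,\pi/2)$. The only difference is cosmetic — you keep the factor $(-1)^\ell$ explicit so it cancels and gives closed-form residues $-\sin(\pi\ell/k_j)/(2k_j)$ and $-\sin(2x_\ell)/(m_j-2)$, whereas the paper splits into even/odd cases (and you also correct a harmless typo, $\sin((2k_j-1)x)$ versus $\sin((2k_j-2)x)$, in its residue formula).
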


\begin{proof}
Consider first $f^\A_{k_j}$. Let $x$ be a pole of $f^\A_{k_j}$,
i.~e. $2k_jx=n\pi$ for some $n\in\Z$. Then the residue of
$f^\A_{k_j}$ at $x$ is equal to
$\frac{\sin((2k_j-1)x)}{2k_j\cos(2k_jx)}$.

If $n$ is odd, then $\cos(2k_jx)=-1$, and
$\sin((2k_j-2)x)=\sin(n\pi-2x)=\sin2x$, which is positive for
$x\in(0,\pi/2)$.

Similarly, if $n$ is even, then $\cos(2k_jx)=1$, and
$\sin((2k_j-2)x)=\sin(n\pi-2x)=-\sin2x$, which is negative for
$x\in(0,\pi/2)$.

For $f^\D_{m_j}$ the argument is similar. Poles of $f^\D_{m_j}$ are
at $x=(\pi/2+n\pi)/(m_j-2)$. The residue is equal to
$-\frac{\cos((m_j-4)x)}{(m_j-2)\sin((m_j-2)x)}$.

For even $n$ we have $\sin((m_j-2)x))=1$, and
$\cos((m_j-4)x)=\cos(\pi/2-2x)$, which is positive for
$x\in(0,\pi/2)$.

For odd $n$ we have $\sin((m_j-2)x))=-1$, and
$\cos((m_j-4)x)=\cos(-\pi/2-2x)$, which is negative for
$x\in(0,\pi/2)$.
\end{proof}

Therefore by Lemma \ref{lem:roots} the number of zeros of $\ph$ on
$[0,\pi/2]$ is at least the number of poles of $\ph$ on $[0,\pi/2]$
minus 1. Since $\ph$ is even, the same holds for the interval
$[-\pi/2,0]$. We conclude that the number of different zeros of
$\ph(x)$ on $(-\pi/2,\pi/2)$ is at least the number of its poles on
$(-\pi/2,\pi/2)$ minus 2 (note that $\ph(x)$ has no poles at $0$ and
$\pm\pi/2$).

Equivalently, the number of zeros of $Q$ on the unit circle is at
least the number of its poles on the unit circle minus 2.

The rational function $Q$ has a double zero at infinity. Therefore
the number of all finite zeros counted with multiplicities is equal
to $N-2$, where $N$ is  the number of poles of $Q$ counted with
multiplicities. But the poles of $Q$ are simple and lie on the unit
circle. Therefore the number of zeros of $Q$ on the unit circle is
already at least $N-2$, i.~e. the maximum possible. We conclude that
all zeros of $Q$ are on the unit circle (and they are all simple).

\

\noindent\emph{The case of $S\oplus\E_7^{\oplus l}$} : The
Poincar\'e polynomial $P_L(S\oplus\E_7^{\oplus l})$ in Theorem
\ref{main} has the following form:
$$
P=\left[\(P(\E_7)(t)\)^l\prod_{j=1}^kP(\A_{k_j})(t)\prod_{j=1}^M
P(\D_{m_j})(t)\right]Q(t),
$$
where $Q=l\frac{P_L(\E_7)(t)}{P(\E_7)(t)}+\sum_{j=1}^k
\frac{P_L(\A_{k_j})(t)}{P(\A_{k_j})(t)}+\sum_{j=1}^M
\frac{P_L(\D_{m_j})(t)}{P(\D_{m_j})(t)}$.

Taking Proposition \ref{table} into account, we see that the real
function $\ph(x)=Q(e^{2ix})$ has the form
$$
\ph(x)=l\frac{\sin4x\cos x}{\sin7x}+\sum_{j=1}^k
\frac{\sin((2k_j-2)x)}{\sin(2k_jx)}+\sum_{j=1}^M
\frac{\cos((m_j-4)x)}{\cos((m_j-2)x)}.
$$
Again, the function $\ph$ is even and $\pi$-periodic.

The function $f^{\E_7}=\frac{\sin4x\cos x}{\sin7x}$ has $3$ poles in
the interval $[0,\pi/2]$ at points $k\pi/7$, $k=1,2,3$. The residues
are negative at $\pi/7$, $2\pi/7$ and positive at $3\pi/7$.

Therefore by Lemma \ref{lem:roots} the number of zeros of $\ph$ on
the interval $(0,\pi/2)$  is at least the number of poles of $\ph$
on this interval minus 3.

Let us apply Lemma \ref{lem:roots2} to $\ph$ and $[0,\pi/2]$. First,
$\ph(0)=l\cdot4/7+\sum\frac{k_j-1}{k_j}+M>0$. Also,
$\ph(\pi/2)=-\sum\frac{k_j-1}{k_j}+\sum\lim_{x\to\pi/2}\frac{\cos((m_j-4)x)}{\cos((m_j-2)x)}<0$,
since $\lim_{x\to\pi/2}\frac{\cos((m_j-4)x)}{\cos((m_j-2)x)}$ is
either $-1$ or $-\frac{m-4}{m-2}$ depending on whether $m_j$ is even
or odd. Therefore the number $c$ in Lemma \ref{lem:roots2} is equal
to $1$.

We conclude that the number of zeros of $\ph$ on the interval
$(0,\pi/2)$ differs from the number of poles of $\ph$ on this
interval by an odd number.

Again, since $\ph$ is even, the number of different roots of $Q$ on
the unit circle is at least the number of its poles on the unit
circle minus twice an odd number.

As before, $Q$ has a double zero at infinity, and does not have
poles outside the unit circle. Therefore the total number of zeros
of $Q$ counted with multiplicities is equal to the number of poles
of $Q$ lying on the unit circle minus 2. Subtracting the zeros lying
on the unit circle we get that the number of zeros not lying on the
unit circle is either 4 or 0.

\subsection{Final notes}
Of course one would like to have more conceptual proof of the above
theorem, e.~g. by relating to a semisimple singularity some
naturally defined unitary operator whose characteristic polynomial
would coincide with the Poincar\'e polynomial of the corresponding
Lie algebra.

Results of some numerical computations reproduced in the table below
show that for the Poincar\'e polynomials of Lie algebras
corresponding to $S\oplus\E_7^{\oplus l}$ both possibilities
mentioned in Theorem 1 are indeed realised. That is, there might
occur 4 non-unimodular roots, and there might be none too.

\

\

\begin{center}
\begin{tabular}{r|ccc}
&\multicolumn3c{Number of roots outside the unit circle for}\\
$k$&\quad\ \ $\A_k\oplus\E_7$\ \ \ \ \ \ &$\D_{2k}\oplus\E_7$\ \ &$\D_{2k+1}\oplus\E_7$\ \ \ \ \ \ \\
\hline
2&&&0\\
3&&0&0\\
4&0&0&0\\
5&0&4&0\\
6&0&4&0\\
7&0&4&0\\
8&4&0&4\\
9&4&0&4\\
10&4&0&4\\
11&4&0&4\\
12&0&4&0\\
13&0&4&0\\
14&0&4&0\\
15&4&4&\\
16&4&&\\
\end{tabular}
\end{center}

\end{document}